\definecolor{halfgray}{gray}{0.55} 
\definecolor{webgreen}{rgb}{0,0.5,0}
\definecolor{webbrown}{rgb}{.6,0,0} \hypersetup{%
\newtheorem{theorem}{Theorem}[section]
\newtheorem{lemma}[theorem]{Lemma}
\newtheorem{corollary}[theorem]{Corollary}
\newtheorem{proposition}[theorem]{Proposition}
\newtheorem{claim}[theorem]{Claim}
\def\G{\mathcal{G}}
\def\R{\mathbb{R}}
\def\Reg{\mathcal{R}^{\mu}}
\def\Pd{\mathbb{P}^{d-1}}
\def\Pr{\mathbb{P}}
\newcommand{\norm}[1]{{\left\lVert \, #1 \, \right\rVert}}
\def\exteriorj{\Lambda^{j}\left(\mathbb{R}^d\right)}
\def\grassj{\textrm{Grass}(j,d)}
\def\d{\operatorname{dist}}
\begin{document}

\title
[Periodic approximation of Oseledets subspaces]
{Periodic approximation of Oseledets subspaces for semi-invertible cocycles}

\author{Lucas Backes}

\address{\noindent Departamento de Matem\'atica, Universidade Federal do Rio Grande do Sul, Av. Bento Gon\c{c}alves 9500, CEP 91509-900, Porto Alegre, RS, Brazil.
\newline e-mail: \rm
  \texttt{lhbackes@impa.br} }

\date{\today}

\keywords{Semi-invertible linear cocycles, Oseledets subspaces, periodic points, approximation}
\subjclass[2010]{Primary: 37H15, 37A20; Secondary: 37D25}

\begin{abstract}
We prove that, for semi-invertible linear cocycles, Oseledets subspaces associated to ergodic measures may be approximated by Oseledets subspaces associated to periodic points.
\end{abstract}

\maketitle

\section{Introduction}

Since its introduction by Smale in \cite{Sm67}, the notion of hyperbolicity has played a major rule in the study of Dynamical Systems. One of the main features exhibited by such systems is the abundance of periodic points and, as a consequence, many of its interesting dynamical properties can be described in terms of the information given on such periodic points. For instance, it is known that cohomology classes of H\"older cocycles over hyperbolic systems are characterized by its information on periodic points (see for instance \cite{Liv71, Liv72, Kal11, dLW10, Bac15, Sa15, BK16, KP16} and references therein), equilibrium states associated to different potentials coincide whenever those potentials have the same information on periodic points \cite{Bow75} and the information carried by the Lyapunov exponents is concentrated on periodic points \cite{Dai10, WS10, Kal11, Bac}.

In this note we address the problem of extracting information from periodic points in the context of Oseledets splittings of semi-invertible linear cocycles. As a consequence of our main result we get that (see Section \ref{subsec: main} for precise statements)
\begin{theorem}
If $f:M\to M$ is a homeomorphism satisfying the Anosov Closing property and $A:M\to M(d,\mathbb{R})$ is a $\alpha$-H\"older continuous map then the Oseledets subspaces of $(A,f)$ associated to ergodic measures can be approximated by Oseledets subspaces of $(A,f)$ associated to periodic points.
\end{theorem}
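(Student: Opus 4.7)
The plan is to fix an ergodic $f$-invariant measure $\mu$ with Lyapunov exponents $\lambda_1 > \cdots > \lambda_k$ of multiplicities $d_1, \ldots, d_k$ and Oseledets decomposition $\R^d = E_1^\mu(x) \oplus \cdots \oplus E_k^\mu(x)$ defined $\mu$-almost everywhere, and to approximate each $E_i^\mu(x)$ by invariant subspaces of the periodic cocycle $A^n(p) := A(f^{n-1}p) \cdots A(fp) A(p)$ at periodic points $p$ of period $n$ whose orbits shadow the orbit of $x$.

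First, using Birkhoff's theorem together with an Egorov/Luzin-type extraction, I would produce a compact Pesin-style block $\Lambda = \Lambda(\varepsilon)$ of positive $\mu$-measure on which, uniformly in $x \in \Lambda$ and for all large $n$, the Oseledets growth rates $\norm{A^n(x) v} \in [C^{-1} e^{n(\lambda_i - \varepsilon)} \norm{v}, \, C e^{n(\lambda_i + \varepsilon)} \norm{v}]$ for $v \in E_i^\mu(x)$ hold and the angles between $E_i^\mu(x)$ and $\bigoplus_{j \neq i} E_j^\mu(x)$ are bounded below. Poincar\'e recurrence then gives, for $\mu$-a.e.\ $x \in \Lambda$, arbitrarily large integers $n$ with $f^n(x) \in \Lambda$ and $\delta_n := \d(f^n(x), x) \to 0$. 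The Anosov Closing property produces periodic points $p = p_n$ of period $n$ that exponentially shadow the orbit of $x$: $\d(f^i(x), f^i(p)) \lesssim \delta_n \, \gamma^{\min(i, n-i)}$ for some fixed $\gamma \in (0,1)$.

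Next, the $\alpha$-H\"older continuity of $A$ combined with this geometric shadowing gives, after telescoping, $\norm{A^n(p) - A^n(x)} \lesssim \delta_n^\alpha$. The problem then reduces to a purely linear-algebraic perturbation question: by the uniform Pesin estimates on $\Lambda$, the singular value decomposition of $A^n(x)$ exhibits $k$ blocks separated by gaps of order $e^{n(\lambda_i - \lambda_{i+1}) - O(n\varepsilon)}$, and the corresponding singular subspaces of $A^n(x)$ are $O(e^{-n(\lambda_i - \lambda_{i+1})/2 + O(n\varepsilon)})$-close, in $\grassj$ with $j = d_1 + \cdots + d_i$, to the fast Oseledets flag $E_1^\mu(x) \oplus \cdots \oplus E_i^\mu(x)$. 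A Wedin-type $\sin\Theta$ estimate (or equivalently a cone-invariance argument performed in the exterior power $\exteriorj$) then shows that whenever $\delta_n^\alpha$ is much smaller than the singular value gap, the top invariant subspaces of $A^n(p)$, which are genuine Oseledets subspaces of the periodic orbit, lie close to the same target; intersecting the flag approximations then recovers each individual $E_i^\mu(x)$ up to error tending to $0$ as $n \to \infty$.

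The principal obstacle is the quantitative balance between the perturbation size $\delta_n^\alpha$ and the singular value gaps of $A^n(x)$: the gap is exponentially large in $n$ thanks to the spectral differences $\lambda_i - \lambda_{i+1} > 0$, but the Pesin constants on $\Lambda$ degrade with $n$, so the return-time subsequence must be chosen so that $\delta_n$ decays fast enough to absorb these losses. A secondary subtlety, specific to the semi-invertible setting, is that only the fast Oseledets flag is directly characterized by forward growth, whereas the individual subspaces $E_i^\mu$ with $i \geq 2$ arise from a measurable limit procedure rather than an inverse iteration; handling them on equal footing therefore motivates working systematically with the exterior powers $\exteriorj$ and the Grassmannians $\grassj$, where the flag approximation becomes a single statement about top singular subspaces.
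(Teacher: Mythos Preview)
Your proposal follows a direct, quantitative route---Pesin blocks, shadowing, and singular--value perturbation---which is genuinely different from the paper's argument. The paper instead reduces everything to the already--known periodic approximation of the Lyapunov \emph{exponents} (from \cite{Bac}) and then argues purely measure--theoretically: it encodes the Oseledets subspaces as Dirac masses on the projective bundle, shows that the measures $m_k$ carried by the periodic orbits must converge weak$^*$ to the measure $m$ carried by $E^{1,A}_x$ because $m$ is the \emph{unique} $F_A$--invariant lift of $\mu$ realising $\gamma_1(A,\mu)$ (Proposition~\ref{prop: decomposition}), and then upgrades to all indices simultaneously by passing to exterior powers and to the adjoint cocycle. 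No perturbation estimate on $A^n(p)-A^n(x)$ is ever needed.

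Your route, as written, has a real gap at the perturbation step. Telescoping gives
\[
\norm{A^n(p)-A^n(x)}\;\le\;\sum_{i=0}^{n-1}\norm{A}_\infty^{\,n-1}\,C_2\bigl(C_1\delta_n e^{-\theta\min(i,n-i)}\bigr)^{\alpha}
\;\lesssim\;\norm{A}_\infty^{\,n}\,\delta_n^{\alpha},
\]
not $\delta_n^\alpha$; even after replacing $\norm{A}_\infty^n$ by the sharper $e^{n(\lambda_1+\varepsilon)}$ via a Kalinin--type bootstrap, the ratio that enters a Wedin $\sin\Theta$ estimate for the $i$--th flag is of order $e^{n(\lambda_1-\lambda_i+2\varepsilon)}\delta_n^\alpha$. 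For $i\ge 2$ this blows up exponentially, and Poincar\'e recurrence gives you no control whatsoever on the decay rate of $\delta_n$ along return times---there is no mechanism to ``choose the return--time subsequence so that $\delta_n$ decays fast enough.'' A second, related issue is that the Oseledets subspaces at the periodic point $p$ are the generalised \emph{eigenspaces} of $A^n(p)$, not its singular subspaces, so one further comparison is needed before the Wedin step can conclude anything about them. These difficulties are exactly what the paper's measure--theoretic approach circumvents: once the Lyapunov exponents converge, weak$^*$ compactness plus the uniqueness of the measure realising the top exponent forces convergence of the subspaces without any quantitative matching of $\delta_n$ against spectral gaps.
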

This problem was already addressed by \cite{LLS09} in the context of $C^{1+r}$ non-uniformly hyperbolic systems with simple Lyapunov spectrum and was latter generalized by \cite{LLS14} to any $C^{1+r}$ non-uniformly hyperbolic system (that is, with no simplicity assumption). While both works dealt only with the case of derivative cocycles with no zero Lyapunov exponents (which is a particular example of invertible cocycle) we treat the broader case of semi-invertible cocycles.

\section{Statements }

Let $(M,d)$ be a compact metric space, $\mu$ a measure defined on the Borel sets of $(M,d)$ and $f: M \to M $ a measure preserving homeomorphism. Assume also that $\mu$ is ergodic. 
\subsection{Semi-invertible cocycles, Lyapunov exponents and Oseledets splittings}

Given a measurable matrix-valued map $A:M\rightarrow M(d, \mathbb{R})$, the pair $(A,f)$ is called a \textit{semi-invertible linear cocycle} (or just \textit{linear cocycle} for short). Sometimes one calls linear cocycle (over $f$ generated by $A$), instead, the sequence $\lbrace A^n\rbrace _{n\in \mathbb{N}}$ defined by
\begin{equation}\label{def:cocycles}
A^n(x)=
\left\{
	\begin{array}{ll}
		A(f^{n-1}(x))\ldots A(f(x))A(x)  & \mbox{if } n>0 \\
		Id & \mbox{if } n=0 \\
	\end{array}
\right.
\end{equation}
for all $x\in M$. The word `semi-invertible' refers to the fact that the action of the underlying dynamical system $f$ is invertible while the action on the fibers given by $A$ may fail to be invertible. We refer to the Introduction of \cite{DrF} for some interesting applications of semi-invertible cocycles.  

Assuming $\int \log ^{+}\norm{A(x)}d\mu (x)<\infty$, it was proved in \cite{FLQ10} that there exists a full $\mu$-measure set $\Reg \subset M$, whose points are called $\mu$-regular points, such that for every $x\in \Reg$ there exist numbers $\lambda _1>\ldots > \lambda _{l}\geq -\infty$, called \textit{Lyapunov exponents}, and a direct sum decomposition $\mathbb{R}^d=E^{1,A}_{x}\oplus \ldots \oplus E^{l,A}_{x}$ into vector subspaces which are called \textit{Oseledets subspaces} and depend measurable on $x$ such that, for every $1\leq i \leq l$,
\begin{itemize}
\item dim$(E^{i,A}_{x})$ is constant,
\item $A(x)E^{i,A}_{x}\subseteq E^{i,A}_{f(x)}$ with equality when $\lambda _i>-\infty$
\end{itemize}
and
\begin{itemize}
\item $\lambda _i =\lim _{n\rightarrow +\infty} \dfrac{1}{n}\log \parallel A^n(x)v\parallel$ 
for every non-zero $v\in E^{i,A}_{x}$.
\end{itemize}
This result extends a famous theorem due to Oseledets \cite{Ose68} known as the \textit{multiplicative ergodic theorem} which was originally stated in both, \textit{invertible} (both $f$ and the matrices are assumed to be invertible) and \textit{non-invertible} (neither $f$ nor the matrices are assumed to be invertible) settings (see also \cite{LLE}). While in the invertible case the conclusion is similar to the conclusion above (except that all Lyapunov exponents are finite), in the non-invertible case, instead of a direct sum decomposition into invariant vector subspaces, one only get an invariant \textit{filtration} (a sequence of nested subspaces) of $\R ^d$. 

Let us denote by 
$$\gamma _1(A,\mu)\geq \gamma _2(A,\mu)\geq \ldots \geq \gamma _d(A,\mu)$$
the Lyapunov exponents of $(A,f)$ with respect to the measure $\mu$ counted with multiplicities. Given a periodic point $p$, we denote the Lyapunov exponents counted with multiplicities of $(A,f)$ at $p$ by $\{\gamma _i(A,p)\}_{i=1}^d$. When there is no risk of ambiguity, we suppress the index $A$ or even both $A$ and $\mu$ from the previous objects.

\subsection{Angle between subspaces} The \textit{angle} $\measuredangle{(E,F)}$ between two subspaces $E$ and $F$ of $\mathbb{R} ^d$ is defined as follows: given $w\in \mathbb{R}^d$ we define 
$$\d(w,E)=\inf_{v\in E} \norm{ w-v}.$$ 
It is easy to see that $\d(w,E)=\norm{w^{\perp}}$ where $w^{\perp}=w-\mbox{Proj}_{E}w$. More generally, we may consider the distance between $E$ and $F$ given by
\begin{equation}\label{def:distancia}
\d(E,F)=\sup_{v\in E,w\in F}\left\{ \d\left(\frac{v}{\norm{v}},F\right),\d\left(\frac{w}{\norm{w}},E\right)\right\}.
\end{equation}
Then, the angle between $E$ and $F$ is just $\measuredangle(E,F)=\sin^{-1}(\d(E,F))$. 

\subsection{Periodic approximation properties}
We say that $(A,f,\mu)$ has the \textit{periodic approximation property for the Lyapunov exponents} if there exists a sequence $(p_k)_{k\in \mathbb{N}}$ of periodic points satisfying
\begin{equation}\label{eq: mu_pk to mu}
\mu _{p_k} :=\dfrac{1}{n_k}\sum _{j=0}^{n_k-1}\delta _{f^j(p_k)}\xrightarrow[k\to \infty]{\text{weak}^*}\mu
\end{equation}
where $n_k$ is the $f$-period of $p_k$ and such that
\begin{equation}\label{eq: gamma pk to gamma mu}
\gamma_j (A,p_k)\xrightarrow{k\to \infty}\gamma_j (A,\mu)
\end{equation}
for every $j=1,\ldots,d$. 

Similarly, $(A,f,\mu)$ is said to have the \textit{periodic approximation property for the Oseledets splitting} if there exists a sequence $(p_k)_{k\in \mathbb{N}}$ of periodic points satisfying \eqref{eq: mu_pk to mu} and, moreover, for each $k\in \mathbb{N}$ there exists a set $\G_k\subset M$ with $\mu (\G_k)>1-\frac{1}{k}$ so that for every $x\in \G_k$ there exists $q\in \text{orb}(p_k)$ satisfying
$$\measuredangle(E^{j,A}_x,F^{j,A}_{q})<\frac{1}{k}$$
for every $j=1,\ldots ,l$ where $F^{j,A}_{q}$ is the sum of Oseledets subspaces of $A$ at $q$ associated to consecutive Lyapunov exponents. More precisely, if $\gamma_{i_1}(A,\mu)\geq\cdots \geq \gamma _{i_t}(A,\mu)$ are the Lyapunov exponents associated to $E^{j,A}_x$ then $F^{j,A}_{q}$ is the sum of the Oseledets subspaces associated to $\gamma_{i_1}(A,p_k)\geq\cdots \geq \gamma _{i_t}(A,p_k)$.

\subsection{Main result} \label{subsec: main}
The main result of this note is the following one:

\begin{theorem} \label{theo: main} 
Let $f: M\to M $ be a homeomorphism, $\mu$ an ergodic $f$-invariant probability measure and $A:M\to M(d,\R)$ a continuous map. If the system $(A,f,\mu)$ has the periodic approximation property for the Lyapunov exponents then it also has the periodic approximation property for the Oseledets splitting. 
\end{theorem}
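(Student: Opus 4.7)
The strategy combines two ingredients: (i) a Lusin/Egorov-type regularization of the Oseledets decomposition on a ``Pesin-type'' block of large $\mu$-measure, on which the Oseledets subspaces depend continuously on the base point and the finite-time approximation of the filtration $F^{\le j}_x := E^{1,A}_x\oplus\cdots\oplus E^{j,A}_x$ by singular-value subspaces is uniform; and (ii) the characterization of Oseledets subspaces at a periodic point $q$ as (generalized) eigenspaces of the period matrix $A^{n_k}(q)$, whose spectral gap is controlled by the hypothesis on Lyapunov exponents. These ingredients are patched together using the weak-$*$ convergence $\mu_{p_k}\to\mu$. Throughout, I would argue with the filtration $F^{\le j}$ and recover the block subspaces $E^{j,A}_x$ versus the period-matched sums $F^{j,A}_q$ of the statement at the end, by an intersection-of-filtrations construction.

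For $\epsilon>0$ (to be driven to $0$ as $k\to\infty$), apply Lusin's theorem to the measurable maps $x\mapsto E^{i,A}_x$ and Egorov's theorem to the a.e.\ convergences $\tfrac{1}{n}\log\|A^n(x)\mid_{E^{i,A}_x}\|\to\lambda_i$ to produce a compact set $K_\epsilon\subset\Reg$ with $\mu(K_\epsilon)>1-\epsilon$ on which the $x\mapsto E^{i,A}_x$ are uniformly continuous and the finite-time growth rates are uniform. A standard adaptation of the Raghunathan scheme to the semi-invertible setting then yields a threshold $N(\epsilon)$ such that for every $x\in K_\epsilon$ and every $n\ge N(\epsilon)$, the span $V^{\le j}_n(x)$ of the top $\dim F^{\le j}_x$ left-singular vectors of $A^n(x)$ lies within $\epsilon$ of $F^{\le j}_x$ in $\grassj$.

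The weak-$*$ convergence $\mu_{p_k}\to\mu$ gives, for $k$ large, that a $(1-\epsilon)$-fraction of $\mathrm{orb}(p_k)$ lies in a small neighborhood of $K_\epsilon$. A Chebyshev/Fubini-type argument then produces a set $\G_k\subset K_\epsilon$ with $\mu(\G_k)>1-1/k$ such that every $x\in\G_k$ admits some $q=f^i(p_k)$ at distance $<\delta_k\to 0$. On the periodic-point side, for each such $q$ the filtration $F^{\le j}_q$ is exactly the top $\dim F^{\le j}$-dimensional invariant subspace of the period matrix $A^{n_k}(q)$; the assumed convergence $\gamma_i(A,p_k)\to\gamma_i(A,\mu)$ forces the spectral gap of $A^{n_k}(q)$ between its $(\dim F^{\le j}_x)$-th and $(\dim F^{\le j}_x+1)$-th eigenvalue moduli to be of order $e^{n_k(\gamma_j-\gamma_{j+1})/2}$, which by a Davis--Kahan-type inequality makes the finite-time singular subspace $V^{\le j}_{n_k}(q)$ within $\epsilon$ of $F^{\le j}_q$.

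It thus suffices to compare $V^{\le j}_{n_k}(x)$ and $V^{\le j}_{n_k}(q)$ when $d(x,q)<\delta_k$; this is the main obstacle. Direct operator-norm perturbation fails because, in the absence of Anosov closing, the orbits $\{f^i(x)\}$ and $\{f^i(q)\}$ diverge for generic $i$, and $\|A^{n_k}(x)-A^{n_k}(q)\|$ generically swells to $e^{n_k\gamma_1}$, exceeding the spectral gap $\sim e^{n_k\gamma_j}$ in all but the top-exponent case. My approach is to exploit that a $\mu$-typical $x\in\G_k$ has, by Birkhoff applied to an enlargement of $K_\epsilon$, most iterates $f^i(x)$ near $K_\epsilon$, and hence near the matching iterate $f^i(q)$, so that $A^{n_k}$ decomposes as a product of ``good'' factors agreeing with the corresponding factors along $\mathrm{orb}(q)$ up to the continuity modulus of $A$, together with relatively few ``bad'' factors of bounded norm. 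The closeness of top singular subspaces can then be propagated through the forward dynamics on the Grassmannian, where cone contraction is governed by $e^{n_k(\gamma_j-\gamma_{j+1})}$ rather than by raw operator norms. Making this quantitative is the most delicate step; once carried out, a triangle inequality combining the three estimates above yields $\measuredangle(F^{\le j}_x,F^{\le j}_q)<O(\epsilon)$ for every $j$, and intersecting the filtration with its complementary filtration recovers the block-by-block angle bound $\measuredangle(E^{j,A}_x,F^{j,A}_q)<1/k$ demanded by the statement.
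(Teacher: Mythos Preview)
Your proposal contains a genuine gap at exactly the step you flag as ``most delicate.'' You want to compare $V^{\le j}_{n_k}(x)$ with $V^{\le j}_{n_k}(q)$ when $d(x,q)<\delta_k$, and you propose to do so by arguing that for $\mu$-typical $x$, most iterates $f^i(x)$ lie near $K_\epsilon$ and hence near $f^i(q)$. The inference ``near $K_\epsilon$'' $\Rightarrow$ ``near $f^i(q)$'' is unjustified: Birkhoff tells you only that $f^i(x)$ visits $K_\epsilon$ with the right frequency, not that it shadows the orbit of $q$. Without any hypothesis beyond continuity of $A$ and the periodic approximation property for exponents---in particular, with no Anosov Closing or specification assumption---there is no mechanism forcing $f^i(x)$ and $f^i(q)$ to stay close for a positive density of times $i\le n_k$, and so the decomposition into ``good'' and ``bad'' factors collapses. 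Your cone-contraction argument on the Grassmannian needs the products $A(f^i(x))$ and $A(f^i(q))$ to be close at the \emph{same} times $i$, not merely to have similar statistics. A secondary issue: your final step of ``intersecting the filtration with its complementary filtration'' is not available as stated in the semi-invertible setting, since the Oseledets data for $A$ alone only yield the forward filtration; obtaining the slow filtration requires the adjoint cocycle $A^\ast$ over $f^{-1}$, which your outline does not invoke.

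The paper sidesteps orbit tracking entirely. It lifts the problem to the projectivized (and exterior-power, and adjoint) bundle and encodes both the Oseledets subspace at $\mu$-typical $x$ and at periodic $q$ as supports of $F_A$-invariant probability measures $m$ and $m_k$ on $M\times\Pd$ (and its analogues). The key point is that the hypothesis $\gamma_j(A,p_k)\to\gamma_j(A,\mu)$ forces any weak-$*$ limit $\tilde m$ of the $m_k$ to realize the top Lyapunov exponent via $\int\varphi_A\,d\tilde m=\gamma_1$, and a structure theorem for $F_A$-invariant measures (Proposition~\ref{prop: decomposition}) then pins $\tilde m$ down to equal $m$. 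Weak-$*$ convergence $m_k\to m$ immediately yields, via Lusin, the angle estimate without ever comparing $A^{n_k}(x)$ to $A^{n_k}(q)$. The simultaneous approximation of all fast and slow spaces is handled by passing to a product of exterior powers of $A$ and of $A^\ast$, and the individual $E^{j,A}_x$ are recovered from $E^{u_j,A}_x\cap E^{s_{j-1},A}_x$ via a cone-intersection lemma.
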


In view of the previous result, it is natural then to ask whether the converse statement is also true. More precisely, if the system $(A,f,\mu)$ has the periodic approximation property for the Oseledets splitting then does it also have the periodic approximation property for the Lyapunov exponents? So far, we weren't able to prove neither to present a counter-example to it.

We say that $f$ satisfies the \textit{Anosov Closing property} if there exist $C_1 ,\varepsilon _0 ,\theta >0$ such that if $z\in M$ satisfies $d(f^n(z),z)<\varepsilon _0$ then there exists a periodic point $p\in M$ such that $f^n(p)=p$ and
\begin{displaymath}
d(f^j(z),f^j(p))\leq C_1 e^{-\theta \min\lbrace j, n-j\rbrace}d(f^n(z),z)
\end{displaymath}
for every $j=0,1,\ldots ,n$. Examples of maps satisfying this property are shifts of finite type, basic pieces of Axiom A diffeomorphisms and more generally, hyperbolic homeomorphisms. See for instance, \cite{KH95} p.269, Corollary 6.4.17. 

In what follows we are also going to assume that $A:M\to M(d,\R)$ is an $\alpha$-H\"{o}lder continuous map. This means that there exists a constant $C_2>0$ such that
\begin{displaymath}
\norm{A(x)-A(y)} \leq C_2 d(x,y)^{\alpha}
\end{displaymath}
for all $x,y\in M$ where $\norm{A}$ denotes the operator norm of a matrix $A$, that is, $\norm{A} =\sup \lbrace \norm{Av}/\norm{v};\; \norm{v}\neq 0 \rbrace$. 

\begin{corollary}
Let $f: M\to M $ be a homeomorphism satisfying the Anosov Closing property, $\mu$ an ergodic $f$-invariant probability measure and $A:M\to M(d,\R)$ an $\alpha$-H\"{o}lder continuous map. Then, $(A,f,\mu)$ has the periodic approximation property for the Lyapunov exponents and for the Oseledets splitting.
\end{corollary}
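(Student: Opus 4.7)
The plan is to reduce the corollary to Theorem~\ref{theo: main} by verifying that, under the hypotheses, the system $(A,f,\mu)$ satisfies the periodic approximation property for the Lyapunov exponents. Once this is established, Theorem~\ref{theo: main} applies (since an $\alpha$-Hölder map is in particular continuous) and yields the periodic approximation property for the Oseledets splitting automatically, so the two conclusions of the corollary follow simultaneously from a single sequence of periodic points.

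To verify the Lyapunov approximation, I would invoke the Kalinin-type periodic approximation theorem for Hölder cocycles over homeomorphisms satisfying the Anosov Closing property. In the invertible case this is due to Kalinin~\cite{Kal11}, following earlier related work of Dai~\cite{Dai10} and Wang--Sun~\cite{WS10}; in the semi-invertible case relevant here, it is established by Backes~\cite{Bac}. The scheme of proof is standard: choose $x\in\Reg$ simultaneously $\mu$-generic for continuous observables and Oseledets-regular; for each large $n$, the singular values of $A^n(x)$ approximate $n\gamma_j(A,\mu)$. By the Anosov Closing property there exists a periodic point $p_n$ of $f$-period $n$ whose orbit shadows $x,f(x),\dots,f^{n-1}(x)$ with exponential error $C_1 e^{-\theta\min\{j,n-j\}}d(f^n(x),x)$. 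Because $A$ is $\alpha$-Hölder, the products $A^n(p_n)$ and $A^n(x)$ differ, in a multiplicative sense, by matrices whose norms and co-norms are uniformly controlled (the Hölder bound compensates the linear growth $n$ against the exponential closeness along the orbit), which in the semi-invertible setting suffices to conclude $\gamma_j(A,p_n)\to\gamma_j(A,\mu)$. Uniform continuity of test functions together with the same shadowing yields $\mu_{p_n}\to\mu$ in the weak$^*$ topology.

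Once the periodic approximation property for the Lyapunov exponents is in hand, Theorem~\ref{theo: main} provides the periodic approximation property for the Oseledets splitting along the same sequence $(p_n)$, and the corollary follows.

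The only genuine obstacle is the delicate control of the semi-invertible cocycle along shadowing orbits when some Lyapunov exponents equal $-\infty$ and the matrices $A(x)$ are non-invertible, where one cannot work with inverse cocycles and must replace them by singular-value or exterior-power arguments. This is precisely the content of \cite{Bac}, and since that result is already available off the shelf, no further work is required in this note beyond citing it and appealing to Theorem~\ref{theo: main}.
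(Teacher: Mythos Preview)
Your proposal is correct and follows exactly the same route as the paper: invoke \cite{Bac} (and its proof) to obtain the periodic approximation property for the Lyapunov exponents, including the weak$^*$ convergence $\mu_{p_k}\to\mu$, and then apply Theorem~\ref{theo: main} to deduce the corresponding property for the Oseledets splitting. The additional sketch you give of the shadowing/H\"older argument behind \cite{Bac} is accurate but goes beyond what the paper records, which simply cites that result off the shelf.
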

\begin{proof}
It follows from Theorem 2.1 of \cite{Bac} and from its proof that $(A,f,\mu)$ has the periodic approximation property for the Lyapunov exponents. The result then follows applying Theorem \ref{theo: main}.
\end{proof}
As a simple consequence we have 
\begin{corollary} Let $(A,f,\mu)$ be as in the previous corollary. Then, for $\mu$-almost every $x\in M$ there exists a sequence of periodic points $(p_k)_{k\in \mathbb{N}}$ such that
\begin{displaymath}
\measuredangle(E^{j,A}_x,F^{j,A}_{p_k})\xrightarrow{k\to \infty}0
\end{displaymath}
for every $j=1,\ldots ,l$. Moreover, the sequence $(p_k)_{k\in \mathbb{N}}$ may be taken so that
$$\gamma_j (A,p_k)\xrightarrow{k\to \infty}\gamma_j (A,\mu)$$
for every $j=1,\ldots,d$ and
$$\mu _{p_k} =\dfrac{1}{n_k}\sum _{j=0}^{n_k-1}\delta _{f^j(p_k)}\xrightarrow[k\to \infty]{\text{weak}^*}\mu$$
where $n_k$ is the $f$-period of $p_k$.
\end{corollary}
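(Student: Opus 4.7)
The plan is to deduce this pointwise statement from the ``in measure'' statement furnished by the previous corollary via a Borel--Cantelli style extraction. By the previous corollary, $(A,f,\mu)$ has the periodic approximation property for the Oseledets splitting, so there exists a sequence $(p_k)_{k\in\mathbb{N}}$ of periodic points with $\mu_{p_k}\to\mu$ in the weak$^*$ topology, with $\gamma_j(A,p_k)\to\gamma_j(A,\mu)$ for every $j$, and, crucially, sets $\G_k\subset M$ with $\mu(\G_k)>1-1/k$ such that for each $x\in\G_k$ there is some $q\in\operatorname{orb}(p_k)$ with $\measuredangle(E^{j,A}_x,F^{j,A}_q)<1/k$ for every $j$.

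To upgrade this to an almost-everywhere assertion, I would first pass to a subsequence $(k_n)_{n\in\mathbb{N}}$ (for example $k_n=2^n$) for which $\sum_n 1/k_n<\infty$. Since $\mu(M\setminus\G_{k_n})<1/k_n$, the Borel--Cantelli lemma provides a full $\mu$-measure set $\Omega\subset\Reg$ of points $x$ that lie in $\G_{k_n}$ for all but finitely many $n$. For each such $x$ and each sufficiently large $n$, pick a point $q_n\in\operatorname{orb}(p_{k_n})$ witnessing $\measuredangle(E^{j,A}_x,F^{j,A}_{q_n})<1/k_n$ for every $j$, and set $p'_n := q_n$.

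I would then verify that the sequence $(p'_n)$ satisfies all three conclusions. The angle bound gives $\measuredangle(E^{j,A}_x,F^{j,A}_{p'_n})\to 0$; since $p'_n$ belongs to the orbit of $p_{k_n}$ it has the same period and the same Lyapunov spectrum, so $\gamma_j(A,p'_n)=\gamma_j(A,p_{k_n})\to\gamma_j(A,\mu)$; and the empirical measure $\mu_{p'_n}$ coincides with $\mu_{p_{k_n}}$, hence converges to $\mu$ in the weak$^*$ topology. The only mild subtlety is the observation that replacing a periodic point by another point in its orbit leaves the Lyapunov data and the orbit-average measure untouched; with this in mind the Borel--Cantelli step is essentially the entire content of the argument, so I do not expect any serious obstacle.
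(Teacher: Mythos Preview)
Your argument is correct and is precisely the standard extraction the paper has in mind; in fact the paper gives no proof at all for this corollary, presenting it simply as ``a simple consequence'' of the previous one. Two very minor remarks: (i) the subsequence and Borel--Cantelli are not strictly needed, since $\mu(\G_k)>1-1/k$ already gives $\mu(\limsup_k \G_k)=1$, so $\mu$-a.e.\ $x$ lies in infinitely many $\G_k$; (ii) the convergence $\gamma_j(A,p_k)\to\gamma_j(A,\mu)$ is not part of the \emph{definition} of the periodic approximation property for the Oseledets splitting, but it does hold for the sequence produced here because the proof of Theorem~\ref{theo: main} builds the $\G_k$'s from the very sequence witnessing the Lyapunov exponent approximation.
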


\section{Preliminaries}
In this section we present some preliminary notions and results that are going to be used in the proof of our main theorem. We start by recalling the notion of semi-projective cocycle introduced in \cite{BP}.

\subsection{Semi-projective cocycles} \label{sec: semi-projective} Let $\Pd$ denote the real $(d-1)$-dimensional projective space, that is, the space of all one-dimensional subspaces of $\mathbb{R} ^d$. Given a continuous map $A: M\to M(d,\mathbb{R})$, we want to define an action on $\Pd$ which is, in some sense, induced by $A$. If $(x,[v])\in M\times \Pd$ is such that $A(x)v\neq 0$ then we have a natural action induced by $A$ on $\Pd$ which is just given by $A(x)\left[v\right]=\left[A(x)v\right]$. The difficulty appears when $A(x)v=0$ for some $v\neq 0$. To bypass this issue, let us consider the closed set given by
\begin{displaymath}
\ker(A)=\lbrace (x,[v])\in M\times \Pd ; \;A(x)v=0 \rbrace .
\end{displaymath}
If $\mu(\pi(\ker(A)))=0$ where $\pi :M\times \Pd \to M$ denotes the canonical projection on the first coordinate, then $A(x)$ is invertible for $\mu$-almost every $x\in M$ and hence it naturally induces a map on $\Pd$ which is defined $\mu$-almost everywhere and is all we need. Otherwise, if $\mu(\pi(\ker(A)))>0$ let us consider the set
\begin{displaymath}
K(A)=\lbrace (x,[v])\in M\times \Pd ; \; A^n(x)v=0\mbox{ for some }n>0 \rbrace.
\end{displaymath}
Observe that $K(A)\cap\lbrace x \rbrace \times \Pd \subset \lbrace x \rbrace \times E^{l,A}_x$ for every \textit{regular} point $x\in M$.

Since $\pi(K(A))$ is an $f$-invariant set and $\mu$ is ergodic it follows that $\mu(\pi(K(A)))=1$. Thus, we can define a mensurable section $\sigma:M\to \Pd$ such that $(x,\sigma(x))\in K(A)$. Moreover, we can do this in a way such that if $x\in \pi(\ker(A))$ then $(x,\sigma(x))\in \ker(A)$. Fix such a section. We now define the \textit{semi-projective cocycle} associated to $A$ and $f$ as being the map $F_{A}:M\times \Pd\to M\times \Pd$ given by 
\begin{displaymath}
F_{A} (x,\left[v\right])=\left\lbrace\begin{array}{c} 
                         (f(x),\left[ A(x)v\right]) \mbox{ if }A(x)v\neq 0\\
                         (f(x),\sigma(f(x)) \mbox{ if }A(x)v= 0.\\
                        \end{array}
                        \right.
\end{displaymath}
This is a measurable function which coincides with the usual projective cocycle outside $\ker(A)$. In particular, it is continuous outside $\ker(A)$. From now on, given a non-zero element $v\in \mathbb{R}^{d}$ we are going to use the same notation to denote its equivalence class in $\Pd$. 

Given a measure $m$ on $M\times \Pd$, observe that if $m(\ker(A))=0$ then ${F_A}_{\ast}m$ does not depend on the way the section $\sigma$ was chosen. Indeed, if $\psi:M\times \Pd\to \mathbb{R}$ is a mensurable function then
\begin{displaymath}
 \int_{M\times \Pd}\psi\circ F_A dm= \int_{M\times \Pd \setminus \ker(A)}\psi\circ F_A dm.
\end{displaymath}

In the sequel, we will be primarily interested in $F_A$-invariant measures on $M\times \Pd$ projecting on $\mu$, that is, $\pi _{\ast}m=\mu$ and such that $m(\ker(A))=0$. We start by recalling a result from \cite{BP} which says that if the cocycle $A$ has two different Lyapunov exponents then any such a measure may be written as a convex combination of measures concentrated on a suitable combination of the Oseledets subspaces. In order to state it, let us consider the \emph{Oseledets slow} and \emph{fast} subspaces of `order i' associated to $A$ which are given, respectively, by
$$E_x^{s_i,A}=E^{i+1,A}_x\oplus \cdots \oplus E^{l,A}_x$$
and
$$E_x^{u_i,A}=E^{1,A}_x\oplus \cdots \oplus E^{i,A}_x.$$

\begin{proposition}[Propostion 3.1 of \cite{BP}]\label{prop: decomposition}
If $\gamma_i(A)>\gamma_{i+1}(A)$ then every $F_A$-invariant measure projecting on $\mu$ and such that $m(\ker(A))=0$ is of the form $m=a m^{u_i}+b m^{s_i}$ for some $a,b\in [0,1]$ such that $a+b=1$, where $m^{\ast}$ is an $F_A$-invariant measure projecting on $\mu$ such that its disintegration $\{m^{\ast}_x\}_{x\in M}$ with respect to $\mu$ satisfies $m^{\ast}_x(E^{\ast}_x)=1$ for $\ast\in\lbrace s_i,u_i \rbrace$. 
\end{proposition}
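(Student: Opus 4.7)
The plan is to show, via a Birkhoff--Poincar\'e-recurrence argument, that any such $m$ must concentrate, fiber by fiber, on the fast and slow Oseledets bundles of order $i$. First, I would disintegrate $m = \int m_x \, d\mu(x)$ along the projection $\pi$. The hypothesis $F_{A*}m = m$ together with the $f$-invariance of $\mu$ and $m(\ker A) = 0$ yields, for $\mu$-a.e.\ $x$, the equivariance relation $A(x)_\ast m_x = m_{f(x)}$, with $m_x$ putting no mass on $\ker A(x) \cap \Pd$.

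Next, I would apply Birkhoff's ergodic theorem to the scale-invariant cocycle $\Phi(x,[v]) = \log (\|A(x)v\|/\|v\|)$, whose $n$-th Birkhoff sum under $F_A$ equals $\log(\|A^n(x)v\|/\|v\|)$. The integrability hypothesis $\int \log^+\|A\| \, d\mu < \infty$ gives $\int \Phi^+ \, dm < \infty$, so Birkhoff produces an $F_A$-invariant measurable function $\lambda(x,[v]) = \lim_n \frac{1}{n} \log \|A^n(x)v\|$ with values in $[-\infty,\infty)$, defined $m$-a.e. At $\mu$-regular $x$, the multiplicative ergodic theorem identifies $\lambda(x,[v])$ with the largest $\gamma_j(A,\mu)$ for which $v$ has a nonzero $E^{j,A}_x$-component. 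Because $\lambda$ takes only finitely many values and is $F_A$-invariant, the restrictions $m^{\mathrm{fast}} := m|_{\{\lambda \ge \gamma_i\}}$ and $m^{\mathrm{slow}} := m|_{\{\lambda \le \gamma_{i+1}\}}$ are $F_A$-invariant measures summing to $m$; moreover $m^{\mathrm{slow}}$ is automatically concentrated on the slow bundle $\{(x,[v]) : v \in E^{s_i,A}_x\}$.

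The crux is to show that $m^{\mathrm{fast}}$ is concentrated on the fast bundle $\{(x,[v]) : v \in E^{u_i,A}_x\}$. For $(x,[v])$ outside the projectivized slow bundle, write $v = v^u + v^s$ along $\mathbb{R}^d = E^{u_i,A}_x \oplus E^{s_i,A}_x$ and set $\rho(x,[v]) = \|v^s\|/\|v^u\|$, which is scale-invariant and hence well-defined on $\Pd \setminus \mathbb{P}(E^{s_i,A}_x)$. The relations $A(x) E^{u_i,A}_x = E^{u_i,A}_{f(x)}$ and $A(x) E^{s_i,A}_x \subseteq E^{s_i,A}_{f(x)}$ imply that the splitting is $A$-equivariant, so $\rho(F_A^n(x,[v])) = \|A^n(x)v^s\|/\|A^n(x)v^u\|$. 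Oseledets' growth estimates then give
\[
\limsup_n \frac{1}{n} \log \rho(F_A^n(x,[v])) \le \gamma_{i+1}(A,\mu) - \gamma_i(A,\mu) < 0
\]
at $m^{\mathrm{fast}}$-a.e.\ point with $v \notin E^{s_i,A}_x$. In particular $\log \rho \circ F_A^n \to -\infty$ along such orbits, so Poincar\'e recurrence applied to each superlevel set $\{\rho > c\}$ forces $m^{\mathrm{fast}}(\{\rho > c\}) = 0$ for every $c > 0$. Hence $m^{\mathrm{fast}}$ sits on the fast bundle. Normalizing with $a = m^{\mathrm{fast}}(M \times \Pd)$, $b = m^{\mathrm{slow}}(M \times \Pd)$ and taking $m^{u_i}, m^{s_i}$ to be the corresponding probability measures yields the claimed decomposition.

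The step I expect to be most delicate is the Birkhoff one, because $\Phi$ is only extended-real-valued --- it equals $-\infty$ exactly on $\ker A$ (of $m$-measure zero) but is unbounded below near it --- and one needs the limit $\lambda$ to be intrinsic to $[v]$ rather than depend on a chosen representative. The hypothesis $m(\ker A) = 0$ together with the $F_A$-invariance of $m$ (which guarantees orbits almost surely avoid $\ker A$ under every iterate) is what allows identification of $\lambda$ with a genuine Oseledets exponent. A secondary technicality is that $\rho$ is defined only off the projectivized slow bundle, so the Poincar\'e step should be phrased as recurrence within the $F_A$-invariant subset $\{v \notin E^{s_i,A}_x\}$ on which $m^{\mathrm{fast}}$ is supported.
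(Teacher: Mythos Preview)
The paper does not prove this proposition: it is quoted as Proposition~3.1 of \cite{BP} and used as a black box, so there is no in-paper argument to compare your attempt against.

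Your argument is essentially correct and is a standard way to establish this kind of decomposition. Two small points are worth tightening. First, you do not really need Birkhoff for the existence of $\lambda(x,[v])$: the Oseledets theorem already guarantees that for $\mu$-a.e.\ $x$ and \emph{every} nonzero $v$ the limit $\lim_n \tfrac{1}{n}\log\|A^n(x)v\|$ exists and equals one of the $\gamma_j$; since $m$ projects to $\mu$, this holds $m$-a.e.\ and sidesteps the integrability issue you flag. Second, after normalizing you should verify that $m^{u_i}$ and $m^{s_i}$ actually project to $\mu$ rather than to some other $f$-invariant measure: this follows because $\pi_\ast m^{\mathrm{fast}}$ and $\pi_\ast m^{\mathrm{slow}}$ are $f$-invariant sub-probability measures summing to the ergodic measure $\mu$, hence each is a scalar multiple of $\mu$. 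With these remarks, the Poincar\'e-recurrence step (applied to the $F_A$-invariant finite measure $m^{\mathrm{fast}}$ on the $F_A$-invariant set $\{v\notin E^{s_i,A}_x\}$) is valid and forces $\rho=0$ $m^{\mathrm{fast}}$-a.e., which is exactly $v\in E^{u_i,A}_x$.
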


\subsection{The adjoint cocycle}
Given $x\in M$, let $A^{\ast}(x):(\mathbb{R} ^d)^{\ast} \to (\mathbb{R} ^d)^{\ast}$ be the adjoint operator of $A(f^{-1}(x))$ defined by 
\begin{equation}\label{eq: definition adjoint}
(A^{\ast}(x)u)v = u(A(f^{-1}(x))v) \; \mbox{for each} \; u\in (\mathbb{R} ^d)^{\ast} \; \mbox{and} \; v\in \mathbb{R} ^d.
\end{equation}
Fixing some inner product $\langle \; ,\; \rangle$ on $\mathbb{R} ^d$ and identifying the dual space $(\mathbb{R} ^d)^{\ast}$ with $\mathbb{R} ^d$ we get the map $A^{\ast}:M\to M(d,\mathbb{R})$ and equation \eqref{eq: definition adjoint} becomes
\begin{displaymath}
\langle A(f^{-1}(x))u,v\rangle = \langle u,A^{\ast}(x)v\rangle \; \mbox{for every}\; u,v\in \mathbb{R} ^d.
\end{displaymath}
The \textit{adjoint cocycle} of $A$ is then defined as the cocycle generated by the map $A^{\ast}:M\to M(d,\mathbb{R})$ over $f^{-1}:M\to M$.

An useful remark is that the Lyapunov exponents counted with multiplicities of the adjoint cocycle are the same as those of the original cocycle. This follows from the fact that a matrix $B$ and its transpose $B^T$ have the same singular values combined with Kingman's sub-additive theorem. Moreover, Oseledets subspaces of the adjoint cocycle are strongly related with the ones of the original cocycle. More precisely, Lemma 3.1 of \cite{BP} tells us that 
\begin{equation}\label{adjoint.spaces}
E^{s_i,A}_x=(E^{u_i,A^{\ast}}_x)^{\bot} \text{ for every } i=1,\ldots,l
\end{equation}
where the right-hand side denotes the orthogonal complement of the space $E^{u_i,A^{\ast}}_x$.

\subsection{Exterior powers and induced cocycles} 

For every $1\leq j\leq d$ we denote by $\exteriorj$ the $j$th \textit{exterior power} of $\mathbb{R} ^d$ which is the space of alternate $j$-linear forms on the dual $(\mathbb{R} ^d)^{\ast}$. If $\wedge$ denotes the exterior product of vectors of $\mathbb{R} ^d$ then a basis for $\exteriorj$ is given by $\{e_{i_1}\wedge \ldots \wedge e_{i_j}; \; 1\leq i_1<\ldots <i_j\leq l\}$ whenever $\{e_i\}_{i=1}^{d}$ is a basis for $\mathbb{R}^d$. We may also consider the exterior product $V\wedge W$ of subspaces $V$ and $W$ of $\mathbb{R} ^d$. This is defined as the exterior product of the elements of any basis of $V$ with the elements of any basis of $W$. Any linear map $L\in \mbox{M}(d,\mathbb{R})$ induces a linear map $\Lambda ^jL:\exteriorj \to \exteriorj$ by 
\begin{displaymath}
\Lambda ^jL (\omega): \phi _1\wedge \ldots \wedge \phi _j\to \phi _1\circ L\wedge\ldots \wedge\phi _j\circ L. 
\end{displaymath}
Hence, a linear cocyle generated by $B:M\to M(d,\mathbb{R} )$ over $f$ induces a linear cocycle over $f$ on the $j$th exterior power which is generated by the map $x\to \Lambda ^jB(x)$. Moreover, if $B$ satisfies the integrability condition so does $\Lambda ^jB$ and its Lyapunov exponents are given by 
\begin{equation} \label{Lyapunov exterior power}
\{\gamma _{i_1}(B) +\ldots +\gamma _{i_j}(B); \; 1\leq i_1<\ldots < i_j\leq l\}.
\end{equation}
Furthermore, Oseledets subspaces of $\Lambda ^jB$ are strongly related with the ones of $B$. In particular, letting $x\in M$ be a regular point for $(B,f,\mu)$ and $d_i(B)=\sum _{j=1}^i\text{dim}(E^{j,A}_x)$ then for every $1\leq i\leq l$ the Osleledets subspace of $\Lambda ^{d_i(B)}B$ at the point $x\in M$ associated to $\gamma _{1}(B)+\gamma _2(B) +\ldots +\gamma _{d_i(B)}(B)$ is given by
\begin{equation}\label{Oseledets exterior power}
E^{1,B}_x\wedge \ldots \wedge E^{i,B}_x.
\end{equation}
This is all we are going to use about the Oseledets subspaces of induced cocycle.

Let $\grassj$ denote \textit{Grassmannian manifold} of $j$-dimensional subspaces of $\mathbb{R} ^d$. The map $\psi :\grassj \to \Pr (\exteriorj )$ which assigns to each subspace $E\in \grassj$ the projective point $[v]\in \Pr (\exteriorj )$, where $v=v_1\wedge \ldots \wedge v_j$ and $\{v_1,\ldots ,v_j\}$ is any basis for $E$, is an embedding known as the \textit{Pl\"ucker embedding}. Therefore, if $\rho (.,.)$ is a distance on $\Pr (\exteriorj )$ we may push it back to $\grassj$ via $\psi$. More precisely, the map $\d_{\exteriorj}:\grassj\times \grassj\to \mathbb{R}$ given by
\begin{displaymath}
\d_{\exteriorj} (E_1,E_2)=\rho (\psi (E_1),\psi (E_2))
\end{displaymath}
is a distance on $\grassj$ and moreover, if $\rho$ is a distance given by an inner product in the linear space $\exteriorj$ then $\d_{\exteriorj}$ is equivalent to the distance defined in \eqref{def:distancia}.

\section{Approximation of the fastest Oseledets subspace}
In this section we get the desired approximation property for $E^{1,A}_x$ whenever it has dimension one. We chose to present this case separately because its proof illustrates the main ideas used in the general case and, moreover, notations are simpler providing a cleaner exposition.

\begin{proposition} \label{prop: fastest}
Assume $(A,f,\mu)$ has the periodic approximation property for the Lyapunov exponents and let $(p_k)_{k\in \mathbb{N}}$ be a sequence of periodic points satisfying \eqref{eq: mu_pk to mu} and \eqref{eq: gamma pk to gamma mu}. Assume also that $\text{dim}(E^{1,A}_x)=1$. Then given $\varepsilon >0$ there exist an arbitrarily large $k\in \mathbb{N}$ and a set $\G^1:=\G^1_\varepsilon \subset M$ with $\mu (\G^1)>1-\varepsilon$ so that for every $x\in \G^1$ there exists $q\in \text{orb}(p_k)$ satisfying
$$\measuredangle(E^{1,A}_x,E^{1,A}_{q})<\varepsilon.$$
\end{proposition}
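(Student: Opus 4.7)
The plan is to lift the problem to the semi-projective cocycle $F_A$ on $M \times \Pd$, attach a natural $F_A$-invariant measure $m_k$ to each periodic point $p_k$, extract a weak-$\ast$ limit $m$, identify $m$ with the graph measure of $x \mapsto E^{1,A}_x$ via Proposition \ref{prop: decomposition}, and then convert weak-$\ast$ convergence into the pointwise angle estimate using Lusin's theorem.

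First, since $\dim E^{1,A}_x = 1$ gives $\gamma_1(A,\mu) > \gamma_2(A,\mu)$, the hypothesis \eqref{eq: gamma pk to gamma mu} forces $\gamma_1(A,p_k) > \gamma_2(A,p_k)$ for large $k$, so $E^{1,A}_{p_k}$ is a well-defined one-dimensional line. I would set
\[
m_k \;=\; \frac{1}{n_k}\sum_{j=0}^{n_k-1} \delta_{\bigl(f^j(p_k),\, E^{1,A}_{f^j(p_k)}\bigr)},
\]
an $F_A$-invariant probability on $M\times \Pd$ projecting onto $\mu_{p_k}$. Passing to a subsequence yields a weak-$\ast$ limit $m$, which is $F_A$-invariant with $\pi_\ast m = \mu$.

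Next I would invoke Proposition \ref{prop: decomposition} on $m$. The function $\varphi(x,[v]) = \log\bigl(\norm{A(x)v}/\norm{v}\bigr)$ is upper semicontinuous and bounded above, and a direct telescoping shows $\int \varphi\, dm_k = \gamma_1(A,p_k)$. The standard inequality for weak-$\ast$ limits of upper semicontinuous bounded-above functions combined with \eqref{eq: gamma pk to gamma mu} then yields $\int \varphi\, dm \geq \gamma_1(A,\mu) > -\infty$, which forces $m(\ker A) = 0$. Proposition \ref{prop: decomposition} gives $m = a\, m^{u_1} + b\, m^{s_1}$; the Lyapunov-exponent identities $\int \varphi\, dm^{u_1} = \gamma_1(A,\mu)$ and $\int \varphi\, dm^{s_1} \leq \gamma_2(A,\mu)$, together with the lower bound above and $\gamma_1 > \gamma_2$, force $b=0$. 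Since $\dim E^{1,A}_x = 1$, the disintegration of $m = m^{u_1}$ is $m_x = \delta_{E^{1,A}_x}$ for $\mu$-a.e.~$x$, so $m$ is exactly the graph measure of $E^{1,A}$. This identification step is the hardest part: both $m(\ker A) = 0$ and $b = 0$ require careful handling of the unbounded-below function $\varphi$ under weak-$\ast$ limits and Oseledets-type upper bounds for invariant measures supported on $E^{s_1,A}$.

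Finally, I would translate the weak-$\ast$ convergence into the angle estimate. Lusin's theorem gives a compact $K \subset M$ with $\mu(K) > 1-\varepsilon/2$ on which $x \mapsto E^{1,A}_x$ is continuous; uniform continuity lets me choose $\eta > 0$ with $\measuredangle(E^{1,A}_x, E^{1,A}_y) < \varepsilon/2$ whenever $x,y \in K$ and $d(x,y) < \eta$. Cover $K$ by finitely many balls $B(x_i, \eta/2)$ with $x_i \in K$, and set $U_i = B(x_i,\eta) \times B_{\Pd}(E^{1,A}_{x_i}, \varepsilon/2)$. Each $U_i$ is open in $M\times \Pd$ with $m(U_i) \geq \mu(K \cap B(x_i, \eta/2))$. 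Portmanteau gives $m_k(U_i) > 0$ eventually whenever $m(U_i) > 0$, which means some $q \in \text{orb}(p_k)$ satisfies $(q, E^{1,A}_q) \in U_i$. Letting $\G^1$ be the union of those $K \cap B(x_i,\eta/2)$ with $m(U_i) > 0$, the discarded indices contribute zero $\mu$-measure to $K$, so $\mu(\G^1) > 1-\varepsilon$, and a triangle inequality in $\Pd$ gives $\measuredangle(E^{1,A}_x, E^{1,A}_q) < \varepsilon$ for every $x \in \G^1$. The remaining ingredients here are soft, relying only on weak-$\ast$ compactness, Portmanteau, and Lusin's theorem.
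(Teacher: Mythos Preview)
Your approach is essentially the paper's: attach graph measures $m_k$ to the periodic orbits, pass to a weak-$\ast$ limit, identify the limit with the graph measure of $E^{1,A}$ via Proposition~\ref{prop: decomposition} and the integral of $\varphi_A$, then convert to the angle estimate with Lusin plus Portmanteau.

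One step you state too quickly is that the weak-$\ast$ limit is $F_A$-invariant. Since $F_A$ is only continuous on the complement of $\ker(A)$, invariance of the limit does \emph{not} follow automatically from invariance of the $m_k$; this is exactly what the paper isolates as its auxiliary lemma. The paper first proves $\tilde m(\ker A)=0$ (by a contradiction argument using the sets $K_\delta=\{(x,[v]):\|A(x)v/\|v\|\|<\delta\}$) and then, knowing that $F_A$ is continuous off a set of $\tilde m$-measure zero, uses a Tietze-extension argument to justify $\int \psi\circ F_A\,dm_{k_i}\to\int \psi\circ F_A\,d\tilde m$ for continuous $\psi$. Your upper-semicontinuity inequality already delivers $\int\varphi\,d\tilde m\geq\gamma_1(A,\mu)>-\infty$, which forces $\tilde m(\ker A)=0$ more cleanly than the paper's argument; you just need to reorder so that this comes \emph{before} the invariance claim, and then add the Tietze step to get $F_A$-invariance and legitimately invoke Proposition~\ref{prop: decomposition}.
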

\begin{proof}

We start observing that as
$$\gamma_j (A,p_k)\xrightarrow{k\to \infty}\gamma_j (A,\mu)$$
for every $j=1,\ldots,d$ and $\gamma_1(A,\mu)>\gamma_2(A,\mu)$ it follows that $\gamma _1(A,p_k)>\gamma _2(A,p_k)$ for every $k$ sufficiently large and thus $E^{1,A}_{p_k}$ is also one-dimensional. Let us assume without loss of generality that this is indeed the case for every $k\in \mathbb{N}$.

For each $k\in \mathbb{N}$, let us consider the measure
\begin{displaymath}
m_k=\int _{M} \delta _{(x,E^{1,A}_{x})} d\mu_{p_k}(x)
\end{displaymath}
and let $m$ be the measure given by
\begin{displaymath}
m=\int _{M} \delta _{(x,E^{1,A}_{x})} d\mu(x).
\end{displaymath}
Observe that these are $F_A$-invariant measures on $M\times \Pd$ concentrated on $\{(x,E^{1,A}_x); x\in M\}$ and projecting to $\mu _{p_k}$ and $\mu$, respectively. Consequently, letting $\varphi _{A}: M\times \Pd \to \mathbb{R}$ be the map given by
\begin{displaymath}
\varphi _{A}(x,v)=\log \frac{\parallel A(x)v\parallel }{\parallel v \parallel},
\end{displaymath}
it follows easily from the definition and Birkhoff's ergodic theorem that
\begin{equation}\label{eq: measure realizing Lyap exp Ak}
\gamma _1(A,\mu _{p_k})=\int _{M\times \Pd} \varphi _{A}(x,v)dm_k
\end{equation}
and
\begin{equation}\label{eq: measure realizing Lyap exp A}
\gamma _1(A,\mu)=\int _{M\times \Pd} \varphi _{A}(x,v)dm.
\end{equation}

We claim now that $m_k$ converges to $m$ in the weak$^*$ topology. Indeed, let $\{m_{k_i}\}_{i\in \mathbb{N}}$ be a convergent subsequence of $\{m_{k}\}_{k\in \mathbb{N}}$ and suppose it converges to $\tilde{m}$. Since $M \times \Pd$ is a compact space it suffices to prove that $\tilde{m}=m$. In order to do so, we need the following auxiliary result.

\begin{lemma}\label{lemma: kernel has zero measure}
The measure $\tilde{m}$ satisfies $\tilde{m}(\ker (A))=0$. Moreover, it is $F_A$-invariant.
\end{lemma}

\begin{proof}
Suppose by contradiction that $\tilde{m}(\ker(A))=2 c>0$. For each $\delta >0$ let us consider 
\begin{displaymath}
K_{\delta}=\left\{ (x,v)\in M\times \Pd ; \; \norm{A(x)\frac{v}{\norm{v}}}<\delta \right\}.
\end{displaymath}
These are open sets such that $\ker(A)=\cap_{\delta >0} K_{\delta}$ and $\tilde{m}(K_{\delta})\geq \tilde{m}(\ker(A))>c>0$. 

Fix $b\in \mathbb{R}$ such that
$$b< \gamma_1(A,\mu)-\sup_{x,\norm{v}=1} \log \norm{A(x)v}$$ 
and let $\delta>0$ be such that $\log y<\frac{b}{c}$ for every $y<\delta$. Then, for every $i$ sufficiently large $m_{k_i}(K_{\delta})>c>0$ and consequently

\begin{displaymath}
\begin{split}
\gamma_1(A,p_{k_i})&=\int \varphi_{A}d m_{k_i} =\int_{K_\delta} \varphi_{A}d m_{k_i}+ \int _{K_\delta ^c} \varphi_{A}d m_{k_i} \\
&<b+\sup_{x,\norm{v}=1} \log \norm{A(x)v}
\end{split}
\end{displaymath}
contradicting the choice of $b$. Thus, $\tilde{m}(\ker(A))=0$ as we want.

To prove that $\tilde{m}$ is $F_A$-invariant one only has to show that, given a continuous map $\psi:M\times \Pd\to \mathbb{R}$, 
\begin{equation} \label{eq: FA invariance}
\lim_{i\to \infty} \int \psi\circ F_{A} dm_{k_i}=\int \psi\circ F_{A} d\tilde{m}.
\end{equation}

Given $\varepsilon >0$ let $\delta >0$ be small enough so that $\tilde{m}(\overline{K_\delta})<\frac{\varepsilon}{\norm{\psi}}$. Let $\hat{\psi}:M\times \Pd\to \mathbb{R}$ be a continuous function such that it coincides with $\psi\circ F_{A}$ outside $K_{\delta}$ and $\norm{\hat{\psi}}\leq \norm{\psi}$. Note that the existence of such a map is guaranteed by Tietze's extension theorem. Then, 
\begin{displaymath}
\begin{split}
\left| \int \psi\circ F_{A} dm_{k_i}-\int \psi\circ F_{A} d\tilde{m} \right| &\leq \left| \int _{K_\delta ^c} \hat{\psi}dm_{k_i}-\int_{K_\delta ^c} \hat{\psi}d\tilde{m}\right| \\
&+ \int _{K_\delta }\mid \psi \circ F_A\mid dm_{k_i}+ \int_{K_\delta } \mid \psi \circ F_A\mid d\tilde{m}\\
&< 4\varepsilon
\end{split}
\end{displaymath}
for every $i$ sufficiently large proving \eqref{eq: FA invariance} and consequently the lemma.
\end{proof}

Now, recalling that 
\begin{displaymath}
\gamma _1(A,p_{k_i})\xrightarrow{i\to +\infty}\gamma _1(A,\mu)
\end{displaymath}
and observing that
\begin{displaymath}
\int _{M\times \Pd} \varphi _{A}(x,v)dm_{k_i}\xrightarrow{i\to +\infty} \int _{M\times \Pd} \varphi _{A}(x,v)d\tilde{m}
\end{displaymath}
it follows from \eqref{eq: measure realizing Lyap exp Ak} that 
\begin{equation}\label{eq: uniq measure realizing}
\gamma _1(A,\mu)=\int _{M\times \Pd} \varphi _{A}(x,v)d\tilde{m}.
\end{equation}
Thus, from Proposition \ref{prop: decomposition} we get that $\tilde{m}=m$ as claimed. In fact, otherwise the referred proposition would give us that $\tilde{m}=am^{1}+bm^{s}$ where $a,b\in (0,1)$ are such that $a+b=1$ and $m^{s}$ is an $F_A$-invariant measure concentrated on $\{(x,E^{2}_x\oplus \cdots \oplus E^l_x); \; x\in M\}$.  
Therefore,
\begin{displaymath}
\begin{split}
\gamma _1(A,\mu)&=\int _{M\times \Pd} \varphi _{A}(x,v)d\tilde{m} \\
&=a\int _{M\times \Pd} \varphi _{A}(x,v)dm^{1} +b\int _{M\times \Pd} \varphi _{A}(x,v)dm^{s} \\
&\leq a \gamma _1(A,\mu)+b\gamma _2(A,\mu) < \gamma _1(A,\mu).
\end{split}
\end{displaymath}

Observe that as a consequence of this argument we also get that $m$ is the only $F_A$-invariant measure projecting on $\mu$ and satisfying \eqref{eq: uniq measure realizing}. This is going to be used on Section \ref{sec: simultaneous}.

Given $\varepsilon >0$, let $\G^1\subset \Reg \cap \text{supp}(\mu)$ be a compact set satisfying $\mu(\G^1)>1-\varepsilon$ and such that
$$x\to E^{1,A}_x\oplus \ldots \oplus  E^{l,A}_x$$
is continuous on $x\in \G^1$. Observe that the existence of such set is guaranteed by Lusin's theorem. 

Fix $x\in \G^1$. Let $B((x,E^{1,A}_x),\varepsilon)$ be the open $\varepsilon$-neighborhood of $(x,E^{1,A}_x)$ on $M\times \Pd$. Recall that we are considering $M\times \Pd$ endowed with the metric $\tilde{d}$ given by $\tilde{d}((y,v),(z,w))=d(y,z)+\measuredangle(v,w)$.

Thus, since $m(B((x,E^{1,A}_x),\varepsilon))>0 $ (recall that $y\to E^{1,A}_y$ is continuous when restricted to an arbitrarily $\mu$-large set and $\mu$ gives positive measure to every open ball centered at $x$) and $m_k\to m$ it follows that
$$\liminf_{k\to \infty} m_k(B((x,E^{1,A}_x),\varepsilon))\geq m(B((x,E^{1,A}_x),\varepsilon))>0.$$
In particular, there exists $k_0(x)\in \mathbb{N}$ such that
$m_k(B((x,E^{1,A}_x),\varepsilon))>0$ for every $k\geq k_0(x)$. Consequently, it follows from the definition of $m_{k}$ that for every $k\geq k_0(x)$ there exists $j_k\in\{0,1,\ldots,n_k-1\}$ so that 
$$\measuredangle(E^{1,A}_x, E^{1,A}_{f^{j_k}(p_k)})<\varepsilon.$$
To conclude the proof it remains to observe that $k_0(x)$ may be taken independent of $x\in \G^1$. But this follows easily using that $\G^1$ is compact and the Oseledets splitting is continuous when restricted to it. 

\end{proof}

\section{Simultaneous approximations}\label{sec: simultaneous}
In this section we get the desired approximations for the Oseledets slow and fast subspaces. Recall the definitions of $E^{u_j,A}_x$ and $E^{s_j,A}_x$ given in Section \ref{sec: semi-projective}.
\begin{proposition}\label{prop: simultaneous}
Assume $(A,f,\mu)$ has the periodic approximation property for the Lyapunov exponents and let $(p_k)_{k\in \mathbb{N}}$ be a sequence of periodic points satisfying \eqref{eq: mu_pk to mu} and \eqref{eq: gamma pk to gamma mu}. Then given $\varepsilon >0$ there exist an arbitrarily large $k\in \mathbb{N}$ and a set $\G^{s,u}:=\G^{s,u}_\varepsilon \subset M$ with $\mu (\G^{s,u})>1-\varepsilon$ so that for every $x\in \G^{s,u}$ there exists $q\in \text{orb}(p_k)$ satisfying
$$\measuredangle(E^{u_j,A}_x,F^{u_j,A}_{q})<\varepsilon$$
and 
$$\measuredangle(E^{s_j,A}_x,F^{s_j,A}_{q})<\varepsilon$$
for every $j\in\{1,\ldots,l \}$.
\end{proposition}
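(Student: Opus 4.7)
The plan is to mimic the proof of Proposition \ref{prop: fastest} at two levels---exterior powers of $A$ (to bring each fast subspace $E^{u_j,A}_x$ into the one-dimensional setting via Plücker) and the adjoint cocycle $A^*$ (to do the same for the slow subspaces $E^{s_j,A}_x$)---and then to glue these reductions on a single product space so that one common $q\in\text{orb}(p_k)$ realizes every required $\varepsilon$-approximation simultaneously.

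\textbf{Fast subspaces.} Fix $j\in\{1,\ldots,l-1\}$ and let $d_j=\sum_{i=1}^{j}\dim E^{i,A}_x$. By \eqref{Lyapunov exterior power} the Lyapunov exponents of $(\Lambda^{d_j}A,f,\mu)$ are sums of those of $(A,f,\mu)$, so the periodic approximation property for Lyapunov exponents passes from $A$ to $\Lambda^{d_j}A$ with the very same sequence $p_k$. By \eqref{Oseledets exterior power} its top Oseledets subspace is the one-dimensional Plücker image of $E^{u_j,A}_x$, and it is simple since $\gamma_j(A)>\gamma_{j+1}(A)$. Hence Proposition \ref{prop: fastest} applied to $\Lambda^{d_j}A$---together with the equivalence between $\d_{\exteriorj}$ and the Grassmannian distance---approximates $E^{u_j,A}_x$ by $F^{u_j,A}_q$ for some $q\in\text{orb}(p_k)$. \textbf{Slow subspaces.} The adjoint cocycle $(A^*,f^{-1},\mu)$ satisfies: $\mu$ is still $f^{-1}$-ergodic; its Lyapunov spectrum coincides with that of $A$; and the orbits $\text{orb}(p_k)$ are $f^{-1}$-periodic with $\mu_{p_k}$ unchanged. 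Hence the periodic approximation property for Lyapunov exponents transfers to $A^*$, and the fast-subspace argument applied to $A^*$ approximates $E^{u_j,A^*}_x$. Using \eqref{adjoint.spaces} and the identity $\measuredangle(E,F)=\measuredangle(E^\perp,F^\perp)$, this transfers to an approximation of $E^{s_j,A}_x$ by $F^{s_j,A}_q$.

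\textbf{Common $q$.} To make a single $q$ work for every $j$ at once, consider on the compact space
$$X=M\times\prod_{j=1}^{l-1}\mathrm{Grass}(d_j,d)\times\prod_{j=1}^{l-1}\mathrm{Grass}(d-d_j,d)$$
the measures
$$\hat m_k=\int_M\delta_{(x,E^{u_1,A}_x,\ldots,E^{u_{l-1},A}_x,E^{s_1,A}_x,\ldots,E^{s_{l-1},A}_x)}\,d\mu_{p_k}(x)$$
and $\hat m$ defined analogously against $\mu$. Any weak-$^*$ subsequential limit $\hat m^*$ of $(\hat m_k)$ projects, on each single factor $M\times\mathrm{Grass}(\cdot,d)$, to the graph measure over $\mu$ of the corresponding subspace map: this follows by applying the uniqueness observation at the end of the proof of Proposition \ref{prop: fastest} to $\Lambda^{d_j}A$ (for fast factors) and to $\Lambda^{d_j}A^*$ (for slow factors, followed by orthogonal complement). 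Since a probability measure with graph marginals is necessarily supported on the joint graph, $\hat m^*=\hat m$, and therefore $\hat m_k\to\hat m$ weakly. Finally, choosing a compact $\G^{s,u}\subset\Reg\cap\mathrm{supp}(\mu)$ of measure $>1-\varepsilon$ on which the entire Oseledets splitting varies continuously (Lusin), the ball-measure argument at the end of Proposition \ref{prop: fastest}'s proof applies to $\hat m_k\to\hat m$ and yields, uniformly in $x\in\G^{s,u}$ for all sufficiently large $k$, a $q\in\text{orb}(p_k)$ that $\varepsilon$-approximates every $E^{u_j,A}_x$ and $E^{s_j,A}_x$ at once. The main obstacle is the identification $\hat m^*=\hat m$: via the graph-marginal observation it reduces to the uniqueness statement of Proposition \ref{prop: fastest}, applied to $\Lambda^{d_j}A$ for each fast factor and (through the adjoint) to $\Lambda^{d_j}A^*$ for each slow factor.
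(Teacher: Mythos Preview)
Your proof is correct and follows essentially the same route as the paper: build a product measure over all fast and slow directions simultaneously, show that any weak-$^*$ accumulation point of $(\hat m_k)$ must equal $\hat m$ because each factor-marginal is forced to be the graph measure by the uniqueness observation from Proposition~\ref{prop: fastest} (applied to $\Lambda^{d_j}A$ and $\Lambda^{d_j}A^*$), and then finish with the Lusin/ball argument. The only cosmetic difference is that you work directly on Grassmannians while the paper works on the projective exterior powers via the Pl\"ucker embedding; your ``graph marginals force the joint graph'' is exactly the paper's Claim~\ref{claim: product measure}.
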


\begin{proof} 
We start observing that if $j=l$ then $E^{u_l,A}_x=\mathbb{R}^d=F^{u_l,A}_q$. So, we only have to prove the proposition for $j<l$.

For each $j=1,\ldots,l$ we set $d_j=\sum _{i=1}^j\text{dim}(E^{i,A}_x)$ and similarly $d^\ast_j=\sum _{i=1}^j\text{dim}(E^{i,A^\ast}_x)$. Let $\Lambda$ be the space 
$$\Lambda ^{d_1}(\mathbb{R}^d)\times \Lambda ^{d_2}(\mathbb{R}^d)\times \ldots\times \Lambda ^{d_{l-1}}(\mathbb{R}^d)\times \Lambda ^{d^*_1}(\mathbb{R}^d)\times \Lambda ^{d^*_2}(\mathbb{R}^d)\times \ldots\times \Lambda ^{d^*_{l-1}}(\mathbb{R}^d) $$
and let $\Pr$ be equal to
$$\Pr(\Lambda ^{d_1}(\mathbb{R}^d))\times \Pr(\Lambda ^{d_2}(\mathbb{R}^d))\times \ldots\times \Pr(\Lambda ^{d_{l-1}}(\mathbb{R}^d))\times \Pr(\Lambda ^{d^*_1}(\mathbb{R}^d))\times \Pr(\Lambda ^{d^*_2}(\mathbb{R}^d))\times \ldots\times \Pr(\Lambda ^{d^*_{l-1}}(\mathbb{R}^d)). $$

The map $f_{\Lambda A}:M\times\Lambda \to M\times \Lambda$ which assigns to each point 
$$(x,v_{d_1},\ldots,v_{d_{l-1}},v_{d^*_{_1}},\ldots,v_{d^*_{l-1}})\in M\times \Lambda$$
the point 
$$(f(x),\Lambda ^{d_{1}}A(x)v_{d_1},\ldots,\Lambda ^{d_{l-1}}A(x)v_{d_{l-1}},\Lambda ^{d^*_{1}}A^*(x)v_{d^*_{_1}},\ldots,\Lambda ^{d^*_{l-1}}A^*(x)v_{d^*_{l-1}})\in M\times \Lambda$$
induces a semi-projective cocycle $F_{\Lambda A}:M\times \Pr\to M\times \Pr$ as described in Section \ref{sec: semi-projective}.

For each $k\in \mathbb{N}$, let us consider the measure 
\begin{displaymath}
m_k=\int _{M} \delta _{(x,E^{1,\Lambda^{d_1}A}_{x},E^{1,\Lambda^{d_2}A}_{x},\ldots ,E^{1,\Lambda^{d_{l-1}}A}_{x},E^{1,\Lambda^{d^*_1}A^*}_{x},E^{1,\Lambda^{d^*_2}A^*}_{x},\ldots ,E^{1,\Lambda^{d^*_{l-1}}A^*}_{x})} d\mu_{p_k}(x)
\end{displaymath}
where $\mu_{p_k}$ is as in \eqref{eq: mu_pk to mu} and let $m$ be the measure given by
\begin{displaymath}
m=\int _{M} \delta _{(x,E^{1,\Lambda^{d_1}A}_{x},E^{1,\Lambda^{d_2}A}_{x},\ldots ,E^{1,\Lambda^{d_{l-1}}A}_{x},E^{1,\Lambda^{d^*_1}A^*}_{x},E^{1,\Lambda^{d^*_2}A^*}_{x},\ldots ,E^{1,\Lambda^{d^*_{l-1}}A^*}_{x})}  d\mu(x).
\end{displaymath}
Observe that from the choice of $d_j$ and $d^*_j$ and \eqref{Lyapunov exterior power} and \eqref{Oseledets exterior power} these are well defined measures on $M\times \Pr$ and, moreover, they are $F_{\Lambda A}$-invariant measures concentrated on 
$$\{(x,E^{1,\Lambda^{d_1}A}_{x},E^{1,\Lambda^{d_2}A}_{x},\ldots ,E^{1,\Lambda^{d_{l-1}}A}_{x},E^{1,\Lambda^{d^*_1}A^*}_{x},E^{1,\Lambda^{d^*_2}A^*}_{x},\ldots ,E^{1,\Lambda^{d^*_{l-1}}A^*}_{x}); x\in M\}$$ 
and projecting to $\mu _{p_k}$ and $\mu$, respectively. Consequently, letting $\varphi _{\Lambda A}: M\times \Pr \to \mathbb{R}$ be the map given by
\begin{displaymath}
\begin{split}
\varphi _{\Lambda A}(x,v_{d_1},\ldots,v_{d_{l-1}},v_{d^*_{_1}},\ldots,v_{d^*_{l-1}})&=\log \frac{\parallel \Lambda ^{d_1}A(x)v_{d_1}\parallel }{\parallel v_{d_1} \parallel}+\ldots+\log \frac{\parallel \Lambda ^{d_{l-1}}A(x)v_{d_{l-1}}\parallel }{\parallel v_{d_{l-1}} \parallel}\\
&+\log \frac{\parallel \Lambda ^{d^*_1}A^*(x)v_{d^*_1}\parallel }{\parallel v_{d^*_1} \parallel}+\ldots+\log \frac{\parallel \Lambda ^{d^*_{l-1}}A^*(x)v_{d^*_{l-1}}\parallel }{\parallel v_{d^*_{l-1} }\parallel},
\end{split}
\end{displaymath}
it follows from the definition and Birkhoff's ergodic theorem that
\begin{equation*}
\int _{M\times \Pr} \varphi _{\Lambda A}(x,v_{d_1},\ldots,v_{d_{l-1}},v_{d^*_{_1}},\ldots,v_{d^*_{l-1}})dm_k = 2\sum_{j=1}^{l-1} \gamma _1(\Lambda ^{d_j}A,\mu_{p_k})
\end{equation*}
and
\begin{equation*}
\int _{M\times \Pr} \varphi _{\Lambda A}(x,v_{d_1},\ldots,v_{d_{l-1}},v_{d^*_{_1}},\ldots,v_{d^*_{l-1}})dm= 2\sum _{j=1}^{l-1}\gamma _1(\Lambda^{d_j} A,\mu).
\end{equation*}

We now observe that $m_k$ converges to $m$ in the weak$^{*}$ topology. Indeed, suppose $m_k$ converges to some measure $\tilde{m}$. Then, using the previous observations and proceeding as we did in the proof of Proposition \ref{prop: fastest} we get that $\tilde{m}$ is an $F_{\Lambda A}$-invariant measure on $M\times \Pr$ projecting to $\mu$ and satisfying 
$$
\int _{M\times \Pr} \varphi _{\Lambda A}(x,v_{d_1},\ldots,v_{d_{l-1}},v_{d^*_{_1}},\ldots,v_{d^*_{l-1}})d\tilde{m}= 2\sum _{j=1}^{l-1}\gamma _1(\Lambda^{d_j} A,\mu).
$$
Now, the claim follows easily from our next result.

\begin{lemma} \label{lem: uniqueness}
If $\tilde{m}$ is an $F_{\Lambda A}$-invariant measure on $M\times \Pr$ projecting to $\mu$ such that 
\begin{equation*}
\int _{M\times \Pr} \varphi _{\Lambda A}(x,v_{d_1},\ldots,v_{d_{l-1}},v_{d^*_{_1}},\ldots,v_{d^*_{l-1}})d\tilde{m}= 2\sum _{j=1}^{l-1}\gamma _1(\Lambda^{d_j} A,\mu)
\end{equation*}
then $\tilde{m}=m$.
\end{lemma}
In order to prove this lemma we are going to use the following simple fact 
\begin{claim}\label{claim: product measure}
Let $M_1\times M_2$ be a product space and for $j=1,2$ let $\pi_j:M_1\times M_2\to M_j$ be the canonical projection on $M_j$. If $\xi$ is a measure on $M_1\times M_2$ and $(\pi_1)_*\xi =\delta _{x_1}$ and $(\pi_2)_*\xi =\delta _{x_2}$ for some $x_1\in M_1$ and $x_2\in M_2$ then $\xi=\delta_{x_1}\times \delta_{x_1}$. Moreover, a similar statement holds for measures on a product of any finite number of spaces.
\end{claim} 
Indeed, observing that 
$$1=(\pi_1)_*\xi(\{x_1\})=\xi(\{x_1\}\times M_2)$$
we get that $\text{supp } \xi\subset \{x_1\}\times M_2$. Similarly, we conclude that $\text{supp } \xi\subset M_1\times \{x_2\}$. Thus, $\text{supp } \xi\subset \{x_1\}\times \{x_2\}$ and $\xi =\delta _{x_1}\times \delta _{x_2}$.

\begin{proof}[Proof of Lemma \ref{lem: uniqueness}]
Let $\tilde{m}=\int _M \tilde{m}_x d\mu(x)$ be a disintegration of $\tilde{m}$ along $\{\{x\}\times \Pr\}_{x\in M}$ and for each $j\in \{1,\ldots ,l-1\}$ let $\pi _j:\Pr\to \Pr(\Lambda ^{d_j}(\mathbb{R}^d))$ and $\pi ^*_j:\Pr\to \Pr(\Lambda ^{d^*_j}(\mathbb{R}^d))$ be the canonical projections on $\Pr(\Lambda ^{d_j}(\mathbb{R}^d))$ and $\Pr(\Lambda ^{d^*_j}(\mathbb{R}^d))$, respectively, and $\nu ^j_x=(\pi _j)_{*}\tilde{m}_x$ and $\nu ^{j*}_x=(\pi ^*_j)_{*}\tilde{m}_x$ be the projections of $\tilde{m}_x$ on $ \Pr(\Lambda ^{d_j}(\mathbb{R}^d))$ and $\Pr(\Lambda ^{d^*_j}(\mathbb{R}^d))$, respectively. We claim now that, for each $j\in \{1,\ldots ,l-1\}$, $\nu^j_x=\delta _{E^{1,\Lambda^{d_j}A}_x}$ and $\nu^{j*}_x=\delta _{E^{1,\Lambda^{d^*_j}A^*}_x}$ for $\mu$-almost every $x\in M$. Given $j\in \{1,\ldots ,l-1\}$ let $F_{\Lambda ^{d_j}A}$ denote the semi-projective cocycle induced by $(f, \Lambda ^{d_j}A)$ on $M\times \Pr(\Lambda ^{d_j}(\mathbb{R}^d))$. Similarly we define $F_{\Lambda ^{d^*_j}A^*}$.

Let us consider the measures
$$\nu_j=\int _M\nu^j_xd\mu(x) \text{ and }\nu_j^{*}=\int _M\nu^{j*}_xd\mu(x).$$
These are $F_{\Lambda ^{d_j}A}$ and $F_{\Lambda ^{d^*_j}A^*}$-invariant measures on $M\times \Pr(\Lambda ^{d_j}(\mathbb{R}^d))$ and $M\times \Pr(\Lambda ^{d^*_j}(\mathbb{R}^d))$, respectively, projecting to $\mu$ and satisfying 
$$\gamma _1(\Lambda ^{d_j}A,\mu)=\int \log \frac{\parallel\Lambda ^{d_j}A(x)v_{d_j}\parallel}{\parallel v_{d_j}\parallel}d\nu ^j \text{ and }\gamma _1(\Lambda ^{d^*_j}A^*,\mu)=\int \log \frac{\parallel\Lambda ^{d^*_j}A^*(x)v_{d^*_j}\parallel}{\parallel v_{d^*_j}\parallel}d\nu ^{j*}.$$
Indeed,
\begin{displaymath}
\begin{split}
2\sum _{j=1}^{l-1}\gamma _1(\Lambda^{d_j} A,\mu)&= \int _{M\times \Pr} \varphi _{\Lambda A}(x,v_{d_1},\ldots,v_{d_{l-1}},v_{d^*_{_1}},\ldots,v_{d^*_{l-1}})d\tilde{m}\\
&=\int _{M}\int_{\Pr} \varphi _{\Lambda A}(x,v_{d_1},\ldots,v_{d_{l-1}},v_{d^*_{_1}},\ldots,v_{d^*_{l-1}})d\tilde{m}_xd\mu (x)\\
&=\sum_{j=1}^{l-1} \int _{M}\int_{\Pr} \log \frac{\parallel \Lambda ^{d_j}A(x)v_{d_j}\parallel}{\parallel v_{d_j}\parallel} d\tilde{m}_xd\mu (x)\\
&+\sum_{j=1}^{l-1} \int _{M}\int_{\Pr} \log \frac{\parallel \Lambda ^{d^*_j}A^*(x)v_{d^*_j}\parallel}{\parallel v_{d^*_j}\parallel} d\tilde{m}_xd\mu (x)\\
&\leq \sum_{j=1}^{l-1} \int _{M}\int_{\Pr(\Lambda^{d_j}(\mathbb{R}^d))} \log \frac{\parallel \Lambda ^{d_j}A(x)v_{d_j}\parallel}{\parallel v_{d_j}\parallel} d\nu^j_xd\mu (x)\\
&+\sum_{j=1}^{l-1} \int _{M}\int_{\Pr(\Lambda^{d^*_j}(\mathbb{R}^d))} \log \frac{\parallel \Lambda ^{d^*_j}A^*(x)v_{d^*_j}\parallel}{\parallel v_{d^*_j}\parallel} d\nu^{j*}_xd\mu (x).\\
\end{split}
\end{displaymath}
Thus, since 
$$ \int _{M \times \Pr(\Lambda^{d_j}(\mathbb{R}^d))} \log \frac{\parallel\Lambda ^{d_j}A(x)v_j\parallel}{\parallel v_j\parallel} d\xi^j\leq \gamma _1(\Lambda ^{d_j}A,\mu)$$
for every $F_{\Lambda ^{d_j}A}$-invariant measure $\xi^j$ projecting on $\mu$ and similarly for every $F_{\Lambda ^{d^*_j}A^*}$-invariant measure projecting on $\mu$ our claim follows. Hence, from the uniqueness obtained in the proof of Proposition \ref{prop: fastest} we get that 
$$\nu_j=\int_M \delta _{(x,E^{1,\Lambda^{d_j}A}_x)}d\mu(x) \text{ and } \nu^*_j=\int_M \delta _{(x,E^{1,\Lambda^{d^*_j}A^*}_x)}d\mu(x).$$
Consequently, since a disintegration is essentially unique we get that $\nu^j_x=\delta _{E^{1,\Lambda^{d_j}A}_x}$ and $\nu^{j*}_x=\delta _{E^{1,\Lambda^{d^*_j}A^*}_x}$ for $\mu$-almost every $x\in M$ as claimed. Now, invoking Claim \ref{claim: product measure} we get that 
\begin{equation*}
\begin{split}
\tilde{m}_x &=\delta _{E^{1,\Lambda^{d_1}A}_x}\times \delta _{E^{1,\Lambda^{d_2}A}_x}\times \ldots \times \delta _{E^{1,\Lambda^{d_{l-1}j}A}_x}\times \delta _{E^{1,\Lambda^{d^*_1}A^*}_x}\times\ldots \delta _{E^{1,\Lambda^{d^*_{l-1}}A^*}_x} \\
&=\delta _{(x,E^{1,\Lambda^{d_1}A}_{x},E^{1,\Lambda^{d_2}A}_{x},\ldots ,E^{1,\Lambda^{d_l-1}A}_{x},E^{1,\Lambda^{d^*_1}A^*}_{x},E^{1,\Lambda^{d^*_2}A^*}_{x},\ldots ,E^{1,\Lambda^{d^*_{l-1}}A^*}_{x})}
\end{split}
\end{equation*}
for $\mu$-almost every $x\in M$ and thus $\tilde{m}=m$ as stated.
\end{proof}

Now, using that $m_k$ converges to $m$ and proceeding as we did at the end of the proof of Proposition \ref{prop: fastest} we conclude that, given $\varepsilon '>0$, there exist an arbitrarily large $k\in \mathbb{N}$ and a set $\G^{s,u}:=\G^{s,u}_{\varepsilon '} \subset M$ with $\mu (\G^{s,u})>1-\varepsilon '$ so that for every $x\in \G^{s,u}$ there exists $q\in \text{orb}(p_k)$ satisfying
$$\measuredangle(E^{1,\Lambda ^{d_j}A}_x,F^{1,\Lambda ^{d_j}A}_{q})<\varepsilon'$$
and 
$$\measuredangle(E^{1,\Lambda^{d_j^*} A_*}_x,F^{1,\Lambda^{d^*_j} A_*}_{q})<\varepsilon'$$
for every $j\in\{1,\ldots,l-1 \}$. Thus, recalling \eqref{Oseledets exterior power} we get that
$$\measuredangle(E^{1,A}_x\wedge \ldots \wedge E^{j,A}_x,F^{1,A}_q\wedge \ldots \wedge F^{j,A}_q)<\varepsilon '$$
and 
$$\measuredangle(E^{1,A^*}_x\wedge \ldots \wedge E^{j,A^*}_x,F^{1,A^*}_q\wedge \ldots \wedge F^{j,A^*}_q)<\varepsilon '$$
for every $j\in\{1,\ldots,l-1 \}$. Consequently, from the definition of $\d_{\Lambda ^r(\mathbb{R} ^d)}$ it follows that
$$\d_{\Lambda ^r(\mathbb{R} ^d)}(E^{1,A}_x\oplus \ldots \oplus E^{j,A}_x,F^{1,A}_q\oplus \ldots \oplus F^{j,A}_q)<\varepsilon '$$
and 
$$\d_{\Lambda ^r(\mathbb{R} ^d)}(E^{1,A^*}_x\oplus \ldots \oplus E^{j,A^*}_x,F^{1,A^*}_q\oplus \ldots \oplus F^{j,A^*}_q)<\varepsilon '$$
for every $j\in\{1,\ldots,l-1 \}$.
Now, using the fact that the distances $\d_{\Lambda ^r(\mathbb{R} ^d)}$ and $\d$ are equivalent and taking $\varepsilon '>0$ smaller if necessary, it follows that 
$$\measuredangle(E^{1,A}_x\oplus \ldots \oplus E^{j,A}_x,F^{1,A}_q\oplus \ldots \oplus F^{j,A}_q)<\varepsilon $$
and 
$$\measuredangle(E^{1,A^*}_x\oplus \ldots \oplus E^{j,A^*}_x,F^{1,A^*}_q\oplus \ldots \oplus F^{j,A^*}_q)<\varepsilon $$
for every $j\in\{1,\ldots,l-1 \}$. Summarizing, given $\varepsilon >0$ there exist an arbitrarily large $k\in \mathbb{N}$ and a set $\G^{s,u}:=\G^{s,u}_{\varepsilon } \subset M$ with $\mu (\G^{s,u})>1-\varepsilon $ so that for every $x\in \G^{s,u}$ there exists $q\in \text{orb}(p_k)$ satisfying
$$\measuredangle(E^{u_j,A}_x,F^{u_j,A}_{q})<\varepsilon$$
and 
$$\measuredangle(E^{u_j, A^*}_x,F^{u_j, A^*}_{q})<\varepsilon$$
for every $j\in\{1,\ldots,l-1 \}$. Thus, to conclude the proof of the proposition it remains to recall \eqref{adjoint.spaces} which says that $E^{s_j,A}_x=(E^{u_j,A^*})^{\perp}$.
\end{proof}

\section{Conclusion of the proof}

Let $(p_k)_{k\in \mathbb{N}}$ be a sequence of periodic points satisfying \eqref{eq: mu_pk to mu} and \eqref{eq: gamma pk to gamma mu}. It is easy to see that in order to complete the proof of Theorem \ref{theo: main} it is enough to observe that given $\varepsilon >0$ there exist an arbitrarily large $k\in \mathbb{N}$ and a set $\G:=\G_\varepsilon \subset M$ with $\mu (\G)>1-\varepsilon$ so that for every $x\in \G$ there exists $q\in \text{orb}(p_k)$ satisfying
$$\measuredangle(E^{j,A}_x,F^{j,A}_{q})<\varepsilon$$
for every $j=1,\ldots ,l$. So, this is what we are going to do.

The \textit{cone} of radius $\alpha >0$ around a subspace $V$ of $\mathbb{R} ^d$ is defined as  
\begin{displaymath}
C_{\alpha}(V)=\left\{ w_1+w_2 \in V \ \oplus V^{\perp}; \; \norm{w_2}<\alpha \norm{w_1}\right\}.
\end{displaymath}
Observe that this is equivalent to 
\begin{displaymath}
C_{\alpha}(V)=\left\{ w\in \mathbb{R} ^d ; \; \d\left(\frac{w}{\norm{w}},V\right)<\alpha\right\}
\end{displaymath}
where $\d$ is the distance defined in \eqref{def:distancia}.

\begin{lemma}[Lemma 4.2 of \cite{BP}]\label{intersection}
Given $1\leq j\leq l$, $\varepsilon '>0$ and $\delta>0$ there exist a subset $K=K(\varepsilon ')\subset M$ with $\mu(K)>1-\varepsilon '$ and $\delta'=\delta'(\varepsilon ',\delta)\in (0,\delta )$, such that for every $x\in K$, 
$$ C_{\delta'}(E^{u_j,A}_x)\cap C_{\delta'}(E^{s_{j-1},A}_x)\subset C_{\delta}(E^{j,A}_x).$$
\end{lemma}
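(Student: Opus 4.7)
The plan is to first pass to a compact set on which the Oseledets decomposition is continuous and has uniformly transverse summands, and then to perform a linear-algebra estimate that trades the cone conditions around $E^{u_j,A}_x$ and $E^{s_{j-1},A}_x$ for a cone condition around their intersection $E^{j,A}_x$.

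First, since $x\mapsto E^{i,A}_x$ is measurable on $\Reg$ and $\mu(\Reg)=1$, Lusin's theorem produces a compact set $K=K(\varepsilon')\subset \Reg$ with $\mu(K)>1-\varepsilon'$ on which all the maps $x\mapsto E^{i,A}_x$, $i=1,\dots,l$, are continuous. Writing $U_x=E^{u_{j-1},A}_x$, $V_x=E^{j,A}_x$ and $S_x=E^{s_j,A}_x$, so that $\R^d=U_x\oplus V_x\oplus S_x$, $E^{u_j,A}_x=U_x\oplus V_x$ and $E^{s_{j-1},A}_x=V_x\oplus S_x$, the pairwise angles between these subspaces depend continuously on $x\in K$ and are strictly positive because the direct sum is honest. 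By compactness of $K$ I obtain a constant $\eta=\eta(\varepsilon')>0$ bounding below all these angles and, equivalently, bounding above the norms of the oblique projectors associated with the decomposition $\R^d=U_x\oplus V_x\oplus S_x$ and with the decompositions $\R^d=E^{u_j,A}_x\oplus S_x$ and $\R^d=U_x\oplus E^{s_{j-1},A}_x$ by some constant $C=C(\varepsilon')<\infty$.

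Next, the linear algebra. Fix $x\in K$ and $w\in C_{\delta'}(E^{u_j,A}_x)\cap C_{\delta'}(E^{s_{j-1},A}_x)$, which I may normalize to $\norm{w}=1$. The first cone condition says that the orthogonal component of $w$ off $E^{u_j,A}_x$ has norm $<\delta'$ up to a factor $1+\delta'$; since the oblique projection of $w$ onto $S_x$ along $E^{u_j,A}_x$ factors through this orthogonal component with operator norm at most $C$, I get $\norm{s}\leq C'\delta'$ for the $S_x$-component of $w$ in the decomposition $w=u+v+s\in U_x\oplus V_x\oplus S_x$, where $C'$ depends only on $C$. The analogous argument with $E^{s_{j-1},A}_x$ yields $\norm{u}\leq C'\delta'$. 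Consequently $\norm{v}\geq 1-2C'\delta'$, and the orthogonal distance of $w$ to $V_x=E^{j,A}_x$ is at most $\norm{u}+\norm{s}\leq 2C'\delta'$. Therefore $w\in C_{\delta}(E^{j,A}_x)$ provided I chose $\delta'=\delta'(\varepsilon',\delta)\in(0,\delta)$ small enough that $2C'\delta'<\delta(1-2C'\delta')$, for instance $\delta'=\delta/(4C'(1+\delta))$.

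The main obstacle I expect is the last step, namely translating between the \emph{orthogonal} distance to a subspace (which is what defines the cones $C_{\alpha}(\cdot)$) and the \emph{oblique} decomposition $U_x\oplus V_x\oplus S_x$ supplied by the Oseledets splitting. The two coincide only when the decomposition is orthogonal, and in general one pays a factor that blows up as angles between the Oseledets subspaces degenerate. The role of the compact set $K$ and of the lower bound $\eta(\varepsilon')$ on these angles is precisely to turn this into a bounded, quantitative statement; once the uniform constant $C(\varepsilon')$ is extracted, the choice of $\delta'$ as above is straightforward.
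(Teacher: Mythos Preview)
The paper does not give its own proof of this lemma; it is simply quoted as Lemma~4.2 of \cite{BP} and used as a black box. There is therefore no argument in the present paper to compare your proposal against.

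That said, your proof is correct and is essentially the natural argument one would expect to find in \cite{BP}: use Lusin to pass to a compact set on which the Oseledets decomposition is continuous, extract a uniform lower bound on the angles between the summands (equivalently, a uniform upper bound on the associated oblique projectors), and then do the elementary linear-algebra estimate. Your handling of the key step---converting the orthogonal cone conditions around $E^{u_j,A}_x$ and $E^{s_{j-1},A}_x$ into smallness of the $U_x$- and $S_x$-components of $w$ in the oblique splitting $U_x\oplus V_x\oplus S_x$ via the bounded projectors---is the right mechanism, and your closing remark correctly identifies the only place where something could go wrong (degeneration of angles) and why the compact set $K$ neutralizes it. The constants could be tidied slightly (e.g., the passage from $\norm{u+s}\leq 2C'\delta'$ to the cone condition $\norm{w_2}<\delta\norm{w_1}$ for the orthogonal decomposition relative to $V_x$ deserves one more line), but the argument is sound.
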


Given $\varepsilon >0$, take $\varepsilon' , \delta \in (0,\frac{\varepsilon}{10})$ and let $K(\varepsilon ')\subset M$ and $\delta '>0$ be given by the previous lemma. Define $\mathcal{G}:=\mathcal{G}^{s,u}_{\delta '}\cap K(\varepsilon ')$ where $\mathcal{G}^{s,u}_{\delta '}$ is the set associated to $\delta'$ by Proposition \ref{prop: simultaneous}. Then, $\mu (\mathcal{G})>1-\varepsilon$ and for every $x\in \mathcal{G}$ there exists $q\in \text{orb}(p_k)$ satisfying
$$\measuredangle(E^{u_j,A}_x,F^{u_j,A}_{q})<\delta '$$
and 
$$\measuredangle(E^{s_j, A}_x,F^{s_j, A}_{q})<\delta '$$
for every $j\in\{1,\ldots,l \}$ and moreover
$$ C_{\delta'}(E^{u_j,A}_x)\cap C_{\delta'}(E^{s_{j-1},A}_x)\subset C_{\delta}(E^{j,A}_x).$$
Thus,
$$F^{j,A}_q\subset F^{u_j,A}_q\cap F^{s_{j-1},A}_q\subset C_{\delta'}(E^{u_j,A}_x)\cap C_{\delta'}(E^{s_{j-1},A}_x)\subset C_{\delta}(E^{j,A}_x).$$
Consequently,
$$\measuredangle(E^{j, A}_x,F^{j, A}_{q})<\delta <\varepsilon$$
for every $j\in\{1,\ldots,l \}$ as we wanted. The proof of Theorem \ref{theo: main} is now complete.

\medskip{\bf Acknowledgements.} The author was partially supported by a CAPES-Brazil postdoctoral fellowship under Grant No. 88881.120218/2016-01 at the University of Chicago. 


\end{document}